\documentclass[11pt]{article}
\usepackage[margin=1.2in]{geometry}
\usepackage{graphicx}
\usepackage{authblk}
\usepackage[dvipsnames]{xcolor}
\usepackage{amsmath,amssymb,amsfonts,amsthm}
\usepackage[ruled,vlined]{algorithm2e}
\usepackage{caption}
\usepackage{subcaption}
\usepackage{enumitem}
\usepackage{url}

\usepackage[
    style=numeric-comp,
    hyperref=true,
    doi=false,
    url=false,
    isbn=false,
    giveninits=true,
    sorting=none,
    block=none,
    backend=bibtex,
    maxnames=99
]{biblatex}
\renewbibmacro{in:}{}
\usepackage{hyperref}

\newtheorem{theorem}{Theorem}
\newtheorem{lemma}{Lemma}

\newtheorem{definition}{Definition}

\newcommand{\bE}{\mathbb{E}}
\newcommand{\N}{\mathbb{N}}

\newcommand{\bS}{\mathbb{S}}

\newcommand{\R}{\mathbb{R}}

\newcommand{\bfo}{\mathbf{1}}

\newcommand{\tr}{\operatorname{tr}}

\makeatletter
\def\blfootnote{\xdef\@thefnmark{}\@footnotetext}
\makeatother

\title{Feedback Stabilization for Sampled Linear Systems with Control-linear Noise}

\begin{document}
\author{Cynthia Cheng \quad \mbox{and} \quad Xudong Chen}
\date{}
\maketitle
\blfootnote{C. Cheng and X. Chen are with the Department of Electrical and Systems Engineering, Washington University, St. Louis. Emails: \texttt{\{cheng.cynthia,cxudong\}@wustl.edu}. Corresponding author: C. Cheng.}

\begin{abstract}
In this paper, we consider linear stochastic systems with control-linear noise and periodically sampled measurements. 
We address the problem of feedback stabilization in the mean-square sense. The main contribution of the paper is to provide a necessary and sufficient condition for feedback stabilization. In particular, we relate feedback stabilizability of the stochastic system to the existence of a fixed point of a Riccati-type algebraic equation and, further, to the existence of a solution to an infinite-horizon optimal control problem.   
\end{abstract}

\section{Introduction}\label{sec:intro}
We consider in this paper a continuous, linear time-invariant stochastic system in the It{\^ o} sense, with control-linear noise and periodically sampled measurements:
\begin{equation}\label{eq:systemmodel}
\left \{
\begin{aligned}
d x_t & = A x_t dt + B u(t) (dt + \sigma dw_t), \\
 y_{k\tau} & = x_{k\tau}, \quad \mbox{for } k  = 0,1,2,\cdots,
\end{aligned}
\right.  
\end{equation}
where $x_t\in \R^n$ is the state of the system, $u(t)\in \R^m$ is the control input, $\sigma$ is a real number,  
$w_t$ is the standard Wiener process, $y_{k\tau}$ is the measurement at time instant $k\tau$, for $\tau > 0$ (so $1/\tau$ is the sampling rate).  We assume that the initial condition $x_0$ is given (which can be arbitrary).  
We address the problem of feedback stabilization in the mean-square sense for system~\eqref{eq:systemmodel}. A precise problem formulation will be given shortly at the beginning of Section~\ref{sec:mainresults}.  

Stochastic systems with state- and/or control-dependent noise have widely been appreciated in the literature for their use in modeling systems with human operators~\cite{levison1969model}, sensorimotor systems~\cite{todorov2005stochastic}, 
mechanical systems subject to random vibrations~\cite{bratus2018optimal}, electric propulsion engines which experience thrust
uncertainties that are linearly proportional to the level of
commanded thrusts~\cite{jenson2020optimal}, just to name a few. 

The problem of feedback stabilization for these stochastic systems is not new and, in fact, has been extensively studied in the literature. Often one relates feedback stabilizability of the system to the existence of a solution to the associated infinite-horizon optimal control problem, which can further be translated into the existence of a positive definite solution of a generalized algebraic Riccati equation. See, e.g.,~\cite{kleinman1969optimal,McLane1971,willems1976feedback} for continuous-time linear systems and~\cite{de1982infinite,ito2016linear,ito2023stochastic} for discrete-time linear systems. In all these works, the authors have assumed that one can access full-state, perfect measurements at {\it any} time instant/step, and looked for linear feedback control laws $u(t) = Kx_t$, for some constant~$K$, to stabilize the systems.   

What makes this paper different from these existing works is the hybrid setting where the dynamics are in continuous time while the measurements are discrete. Our motivation for considering such a setting is rooted in the astronautical applications, especially in autonomous spacecraft guidance, navigation, and control, where the near-orbit dynamics of the spacecraft can be approached by linear control systems with additive noise, control-dependent noise, sampled measurements, and impulsive and/or continuous control inputs~\cite{jenson2022robust}. Our model~\eqref{eq:systemmodel} can naturally be applied to a broader class of cyber-physical systems wherever the dynamics of the physical system are continuous while digital communications, sensing, etc., are scheduled a priori or sporadic (e.g., event driven).  

The same hybrid setting and its variations have been considered in the earlier work~\cite{jenson2020optimal,jenson2021optimal,jenson2022robust}, where a class of finite-horizon optimal control problems have been posed and solved. To the best of our knowledge, feedback stabilization for~\eqref{eq:systemmodel} has not yet been addressed in the literature. 

Note that  system~\eqref{eq:systemmodel} is fully parameterized by the quadruple $(A, B, \sigma, \tau)$. 
It is not hard to see that the larger $\sigma^2$ is, the more uncertainty the control input $u(t)$ will bring into the system. It is also clear that the larger the sampling rate $1/\tau$ is, the more samples the controller will obtain per unit of time.   
As a consequence, feedback stabilizability of system~\eqref{eq:systemmodel}, with $(A, B)$ fixed, is a monotone property with respect to $\sigma^2$ and~$\tau$. We state, without a proof, that if the system represented by $(A, B, \sigma, \tau)$ is feedback stabilizable, then so is $(A, B, \sigma', \tau')$ for any $\sigma'^2\leq \sigma^2$ and for any $\tau'\leq \tau$ (this statement is in fact a consequence of Theorem~\ref{thm:feedbackstabilizableandw} of the paper). 

The arguments above motivate us to investigate the interplay between $\sigma^2$ and $\tau$. In particular, we are driven by the desire for uncovering the fundamental limit of the sampling rate that can sustain feedback stabilization of system~\eqref{eq:systemmodel}.  Specifically, we ask:   
{\it Given $(A, B)$, what is the supremum of $\tau$ (resp. $\sigma^2$) for a given $\sigma^2$ (resp. $\tau$) such that~\eqref{eq:systemmodel} is  feedback stabilizable?}  
A stepping stone toward a complete answer to this question is to obtain   necessary and sufficient conditions for feedback stabilizability of system~\eqref{eq:systemmodel}, through which one wishes to establish connections between feedback stabilization and problems of other types, thus enabling the use of tools from various research domains. 
The main results of this paper serve the above purpose, as we will present in the next section.  

\section{Main Results}\label{sec:mainresults}
We start by introducing the class of feedback control laws that will be considered in the paper. 
A matrix-valued function $K: [0,\tau)\to \R^{m\times n}$ is said to be an $\mathrm{L}^2$-function if $$\int_0^\tau \tr(K^\top (s)K(s)) ds < \infty,$$ where $\tr(\cdot)$ is the trace of a square matrix. 
Let $\mathrm{L}^2([0,\tau), \R^{m\times n})$ be the space of all such functions. For the purpose of feedback stabilization, we consider the following class of linear feedback control laws:   
\begin{equation}\label{eq:feedbackcontrollaw}
u(t):= K(t-k\tau) x_{k\tau},   
\end{equation}
where $K\in \mathrm{L}^2([0,\tau), \R^{m\times n})$, $t \in [k\tau, (k+1)\tau )$, and $k\in \N_0$ (throughout this paper, we use $\N_0$ to denote the set of nonnegative integers).  
Now, we have 

\begin{definition}
    System~\eqref{eq:systemmodel} is {\bf mean-square feedback stabilizable} if there exists a feedback gain $K\in \mathrm{L}^2([0,\tau), \R^{m\times n})$ such that for any initial condition $x_0\in \R^n$, the solution $x_t$ of the system driven by the feedback control law $u(t)$ given in~\eqref{eq:feedbackcontrollaw}
    satisfies  
    $\lim_{t\to\infty}\bE [\|x_t\|^2]  = 0$. 
    We call any such $K$ a {\bf stabilizing feedback gain}.  
\end{definition}

We present the main results of the paper in the next two subsections. Their proofs will be given in Section~\ref{sec:proofs}. 

\subsection{Necessary and sufficient condition}

Let $\bS_n$ be the space of $n$-by-$n$ symmetric matrices, and $\bS^+_n$ be the cone of $n$-by-$n$ positive semidefinite matrices. 
We present below a necessary and sufficient condition for  system~\eqref{eq:systemmodel} to be feedback stabilizable. 
The condition is about existence of a fixed point of a function $f: \mathbb{S}_n^+\to \mathbb{S}_n^+$, which we will introduce now. 
Let $(A, B, \sigma, \tau)$ be the parameter of system~\eqref{eq:systemmodel}. Let $L\in \R^{n\times n}$ and $R\in \R^{m\times m}$ be positive definite matrices. 
Given a matrix $W\in \bS^+_n$, 
let 
\begin{equation}\label{eq:defmatrixMandQ}
\left\{
\begin{aligned}
    Q(s) & := \sigma^2 B^\top e^{A^\top  s} W e^{A s} B + R, \\
    M & := \int_0^\tau e^{A s} B Q^{-1}(s) B^\top e^{A^\top s} ds. 
\end{aligned}
\right. 
\end{equation}
It is clear that $M> 0 $ and $Q(s)> 0$. We then define 
\begin{equation}\label{eq:deffunctionf}
    f(W) := e^{A^\top \tau} W^{\frac{1}{2}} (I + W^{\frac{1}{2}}MW^{\frac{1}{2}})^{-1} W^{\frac{1}{2}}e^{A \tau} + \tau L. 
\end{equation}
Note that $f$ depends implicitly on $L$ and $R$. 
Further, we call a matrix $W\in \bS^+_n$ a {\it fixed point} of $f$ if it satisfies
\begin{equation*}\label{eq:genralizedalgebraicriccati}
W = f(W). 
\end{equation*} 
Since $L > 0$, a fixed point of $f$ is necessarily positive definite. 

We state below relevant properties of the map $f$.  

\begin{theorem}\label{thm:globalconvergence}
    Let $f$ be given as in~\eqref{eq:deffunctionf}, with $L$ and $R$ positive definite matrices. 
    If $f$ has a fixed point~$W$, then it is unique. Moreover, for any $P\in \bS^+_n$, 
    $\lim_{N\to\infty} f^N(P) = W$. 
\end{theorem}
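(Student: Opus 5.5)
The plan is to reinterpret $f$ as the one-period Bellman operator of a sampled LQ problem and then run the standard monotone Riccati-iteration argument. For a gain $K\in\mathrm{L}^2([0,\tau),\R^{m\times n})$, write $\Phi_K:\bS_n^+\to\bS_n^+$ for the map defined by
\[
x_0^\top\Phi_K(W)x_0 := \bE\Big[x_\tau^\top W x_\tau + \int_0^\tau u^\top(s) R\, u(s)\, ds\Big] + \tau\, x_0^\top L x_0 .
\]
Here $x_t$ solves~\eqref{eq:systemmodel} on $[0,\tau)$ under $u(s)=K(s)x_0$. By It\^o's isometry,
\[
\Phi_K(W)=\Psi_K^\top W\Psi_K+\int_0^\tau K^\top(s)\big(\sigma^2B^\top e^{A^\top(\tau-s)}We^{A(\tau-s)}B+R\big)K(s)\,ds+\tau L,
\qquad \Psi_K:=e^{A\tau}+\int_0^\tau e^{A(\tau-s)}BK(s)\,ds .
\]
Thus $\Phi_K(W)=T_K(W)+C_K$, where $T_K$ is a linear positive map on $\bS_n$ and $C_K\ge \tau L>0$.

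\textbf{Step 1 (main obstacle): the variational identity $f(W)=\min_K\Phi_K(W)$ in the Loewner order, with an explicit minimizer.}
\begin{enumerate}[label=(\roman*)]
\item Substitute $s\mapsto\tau-s$, so that the weight in $\Phi_K$ becomes $Q(s)$ from~\eqref{eq:defmatrixMandQ}.
\item For fixed $x_0$, complete the square in $v(s)=K(s)x_0\in\mathrm{L}^2$. Writing $z=e^{A^\top\tau}x_0$ in the appropriate sense, the first-order condition gives $v(s)=-Q^{-1}(s)B^\top e^{A^\top s}W\,\xi$, where $\xi$ solves the finite-dimensional linear equation $\xi=e^{A\tau}x_0-M W\xi$.
\item Plugging this back in, the minimal value is $x_0^\top e^{A^\top\tau}(W^{-1}+M)^{-1}e^{A\tau}x_0$ when $W>0$. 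By the push-through/Woodbury identity this equals $x_0^\top e^{A^\top\tau}W^{1/2}(I+W^{1/2}MW^{1/2})^{-1}W^{1/2}e^{A\tau}x_0$, and that expression is also valid for singular $W$.
\item The minimizer is linear in $x_0$, so it defines a gain $K_W\in\mathrm{L}^2$. For every $K$ we then get $f(W)\le\Phi_K(W)$, with equality at $K=K_W$.
\end{enumerate}
The care needed is in the $s$ versus $\tau-s$ bookkeeping and in handling singular $W$. Two consequences follow immediately: $f$ is monotone, since $W_1\le W_2$ implies $f(W_1)\le\Phi_{K_{W_2}}(W_1)\le\Phi_{K_{W_2}}(W_2)=f(W_2)$; and $f(0)=\tau L$.

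\textbf{Step 2: contraction at a fixed point.} Let $W$ be a fixed point and $K^*:=K_W$. Then $W=T_{K^*}(W)+C_{K^*}$ with $W>0$ and $C_{K^*}>0$. Since $T_{K^*}$ is a positive linear map, this forces $\rho(T_{K^*})<1$: choose $c<1$ with $T_{K^*}(W)\le cW$, then $T_{K^*}^N(W)\le c^NW$, and every $X\in\bS_n$ is bounded by a multiple of $W$. The same conclusion holds for any fixed point.

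\textbf{Step 3: convergence and uniqueness.}
\begin{enumerate}[label=(\roman*)]
\item \emph{Upper bound.} For any $P\in\bS_n^+$, induction with Step 1 gives $f^N(P)\le\Phi_{K^*}^N(P)$. The right-hand side converges to the unique fixed point of the affine map $\Phi_{K^*}$, which is $W$, because $\rho(T_{K^*})<1$.
\item \emph{Lower bound.} By monotonicity, $f^N(P)\ge f^N(0)$. The sequence $f^N(0)$ is increasing, since $f(0)=\tau L\ge0$, and bounded above by $W$, since $0\le W$ implies $f^N(0)\le f^N(W)=W$. Hence it converges to some $W_0\le W$, and $W_0$ is a fixed point by continuity of $f$.
\item \emph{Identifying $W_0=W$.} Let $K_0:=K_{W_0}$, so $\rho(T_{K_0})<1$ by Step 2. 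We have $W=f(W)\le\Phi_{K_0}(W)$ and $W_0=\Phi_{K_0}(W_0)$, so $W-W_0\le T_{K_0}(W-W_0)\le T_{K_0}^N(W-W_0)\to0$. Together with $W_0\le W$, this gives $W_0=W$.
\end{enumerate}
The two bounds then sandwich $f^N(P)$ and yield $f^N(P)\to W$. Uniqueness follows because any fixed point $W'$ satisfies $W'=f^N(W')\to W$.
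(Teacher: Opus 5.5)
Your proof is correct, and it takes a genuinely different route from the paper's.

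The paper argues through its control-theoretic results:
\begin{enumerate}
\item A fixed point yields a mean-square stabilizing gain (item~1 of Theorem~\ref{thm:feedbackstabilizableandw}).
\item Stabilizability makes $f^N(0)$ increasing and bounded (Lemmas~\ref{lem:monotonicityofW} and~\ref{lem:finitecost}).
\item The fixed point is then \emph{chosen} as $W=\lim_N f^N(0)$.
\item Finally, $f^N(P)\to W$ is obtained by comparing finite-horizon optimal costs with terminal weights $P$ and $0$ against the infinite-horizon cost of $u^*$ (the quantities $\Delta_1,\Delta_2$). This uses exponential decay of $\bE\|x_{N\tau}\|^2$ taken from the spectral argument of Lemma~\ref{lem:finitecost}.
\end{enumerate}

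You instead stay entirely at the level of matrix maps.
\begin{enumerate}
\item Your Step~1 is the paper's Lemma~\ref{lem:optimalcontrol} recast as the Loewner-order identity $f=\min_K\Phi_K$. From it you get monotonicity of $f$ on all of $\bS_n^+$, whereas the paper only proves monotonicity of the particular sequence $f^N(0)$. For singular $W$, read the minimal value off your $\xi$-computation as $e^{A^\top\tau}W(I+MW)^{-1}e^{A\tau}+\tau L$ rather than via $W^{-1}$.
\item Your Step~2 gets $\rho(T_{K^*})<1$ directly from the fixed-point equation via $T_{K^*}(W)\le cW$. This replaces both the Lyapunov-decrease argument of Subsection~\ref{ssec:sufficiency} and the ``$xx^\top$ spans $\bS_n$'' argument; here $T_{K^*}$ plays the role of the adjoint operator $\mathcal{L}^*$.
\item The sandwich $f^N(0)\le f^N(P)\le\Phi_{K^*}^N(P)$ replaces the cost-difference estimates.
\end{enumerate}

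Your route gives a self-contained, probability-free proof that needs none of the stabilizability results. The paper's route is more economical once Theorems~\ref{thm:feedbackstabilizableandw} and~\ref{thm:infinitehorizon} are in hand, and it identifies the limit as the optimal infinite-horizon cost.

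One small simplification is available. Step~2 holds for every fixed point, so you can run the upper bound of Step~3(i) with $W_0$ in place of $W$. This gives $f^N(P)\to W_0$ for all $P$, and taking $P=W$ yields $W=W_0$, so Step~3(iii) becomes unnecessary. This is exactly why the paper fixes $W:=\lim_N f^N(0)$ at the outset.
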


The next result relates mean-square feedback stabilizability to the existence of a fixed point of~$f$. 

\begin{theorem}\label{thm:feedbackstabilizableandw} 
    The following two items hold for system~\eqref{eq:systemmodel}: 
    \begin{enumerate}
        \item If there exist $L > 0$ and $R > 0$ such that the map $f$ given in~\eqref{eq:deffunctionf} has a (unique) fixed point $W$, then system~\eqref{eq:systemmodel} is mean-square feedback stabilizable. Moreover, the map $K: [0,\tau)\to \R^{m\times n}$ given by
        \begin{equation}\label{eq:stabilizingfeedbackgain}
            K(s)  :=   - Q^{-1}(\tau - s) B^\top e^{- A^\top s} (W - \tau L), 
        \end{equation}
        is a stabilizing feedback gain, where $Q$ is defined in~\eqref{eq:defmatrixMandQ}. 
        \item Conversely, if system~\eqref{eq:systemmodel} is mean-square feedback stabilizable, then for any $L> 0$ and $R> 0$, $f$ has a (unique) fixed point.  
    \end{enumerate}
\end{theorem}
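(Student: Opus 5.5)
We will prove both items by reducing the sampled closed loop to a discrete-time linear recursion for the second moment, and then reading $f$ as a one-period Bellman (Riccati) operator. Fix $K\in\mathrm{L}^2([0,\tau),\R^{m\times n})$ and set $\Phi_K:=e^{A\tau}+\int_0^\tau e^{A(\tau-s)}BK(s)\,ds$. Integrating~\eqref{eq:systemmodel} over one period with $u(t)=K(t-k\tau)x_{k\tau}$ and using the It\^o isometry gives
\begin{equation*}
\bE[x_{(k+1)\tau}x_{(k+1)\tau}^\top\mid x_{k\tau}] = \Phi_K x_{k\tau}x_{k\tau}^\top\Phi_K^\top+\sigma^2\int_0^\tau e^{A(\tau-s)}BK(s)x_{k\tau}x_{k\tau}^\top K^\top(s)B^\top e^{A^\top(\tau-s)}ds .
\end{equation*}
Hence $X_k:=\bE[x_{k\tau}x_{k\tau}^\top]$ obeys $X_{k+1}=\mathcal{T}_K(X_k)$, where $\mathcal{T}_K$ is a positive linear map on $\bS_n$. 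Inside each period, $\bE\|x_t\|^2$ is bounded by a constant (depending on $\|K\|_{\mathrm{L}^2}$ and $\tau$) times $\|X_k\|$. So mean-square stabilizability with gain $K$ is equivalent to $\rho(\mathcal{T}_K)<1$, i.e.\ to $X_k\to0$ for every $X_0$.

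The next step is a one-period cost identity. With the running cost $\int(x^\top Lx+u^\top Ru)$ in mind, take the dual (cost-to-go) recursion. For $P\in\bS^+_n$, set
\begin{equation*}
J_K(P):=\Phi_K^\top P\Phi_K+\int_0^\tau K^\top(s)Q_P(\tau-s)K(s)\,ds-\int_0^\tau K^\top R K\,ds+(\text{terms from }L),
\end{equation*}
where $Q_P$ denotes~\eqref{eq:defmatrixMandQ} with $W=P$. This is exactly $\mathcal{T}_K^*(P)$ plus the stage cost. The cost terms should be arranged so that $f$, with $\tau L$ as its stage term, is the minimum over $K$ of the resulting quadratic form. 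Minimizing pointwise in $K(\cdot)$ is a constrained least-squares problem: minimize $\int K^\top Q_P(\tau-s)K\,ds$ subject to a fixed value of $\int e^{A(\tau-s)}BK\,ds$. Completing the square yields the minimizer~\eqref{eq:stabilizingfeedbackgain}, with $W-\tau L=e^{A^\top\tau}P^{1/2}(I+P^{1/2}MP^{1/2})^{-1}P^{1/2}e^{A\tau}$ playing the role of $P$ propagated back. The minimum value is $e^{A^\top\tau}P^{1/2}(I+P^{1/2}MP^{1/2})^{-1}P^{1/2}e^{A\tau}$, by the matrix inversion lemma. The outcome is the key inequality $f(P)\preceq \mathcal{T}_K^*(P)+\tau L+(\text{control cost of }K)$ for every $K$, with equality at the optimizer $K_P$. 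If the stage cost does not combine into exactly $\tau L$, I would instead work with the inequality $f(P)\preceq \tau L+\mathcal{T}_K^*(P)+c_K I$. For item~1 only the equality case at the fixed point is needed, and that case is exact.

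For item~1, let $W=f(W)$ and $K=K_W$. Equality gives $W=\mathcal{T}_{K}^*(W)+\tau L+(\text{PSD control term})\succeq \mathcal{T}_K^*(W)+\tau L$ with $\tau L>0$. This is a Lyapunov inequality for the positive map $\mathcal{T}_K^*$ with $W>0$, so $\rho(\mathcal{T}_K)=\rho(\mathcal{T}_K^*)<1$ and $K$ is stabilizing. For item~2, let $K_0$ be stabilizing. Define $V:=\sum_{j\ge0}(\mathcal{T}_{K_0}^*)^j(\tau L+C_{K_0})$, where $C_{K_0}$ collects the control-cost term; this is finite because $\rho<1$. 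Then $f(V)\preceq V$. Since $f$ is monotone (the inner expression is increasing in $P$, and $Q_P$ also increases with $P$, which must be checked), the iterates $f^N(0)$ are increasing and bounded above by $V$. They therefore converge, and by continuity of $f$ the limit is a fixed point. Uniqueness then follows from Theorem~\ref{thm:globalconvergence}.

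The main obstacle is the monotonicity of $f$. Here $P$ enters both through $P^{1/2}(I+P^{1/2}MP^{1/2})^{-1}P^{1/2}$ and through $M$, which decreases as $P$ grows. I would handle this through the variational characterization: $f(P)$ is the minimum over $K$ of an expression affine and increasing in $P$ for each fixed $K$. A pointwise infimum of increasing maps is increasing, so monotonicity follows without differentiating $f$ directly. This is why establishing the min-formula in the cost-identity step exactly is the crux.
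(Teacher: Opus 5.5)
Your proof is correct. For item~1 it is essentially the paper's argument: the identity $W=\mathcal{T}_{K_W}^*(W)+\tau L+\int_0^\tau K_W^\top RK_W\,ds$ is exactly the paper's computation $\bE[z_{k+1}]=\bE[z_k]-\bE[x_{k\tau}^\top(\tau L+\int_0^\tau K^\top RK\,ds)x_{k\tau}]$. You conclude via the Lyapunov criterion for positive maps, whereas the paper uses a monotone-decrease argument on $\bE[z_k]$.

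For item~2 your route genuinely differs. The paper first identifies $f^N(0)$ with the optimal $N$-horizon cost, which requires the full dynamic programming of Theorem~\ref{thm:finitehorizon}. It then gets monotonicity in $N$ by truncating the $(N+1)$-horizon optimal law, and gets the bound $\tilde W$ by comparing with the truncated stabilizing law. You instead work directly on the map $f$. Writing $g_K(P):=\tau L+\mathcal{T}_K^*(P)+\int_0^\tau K^\top RK\,ds$, your one-period formula $f(P)=\min_K g_K(P)$ does two things. First, it makes $f$ order-preserving: $f(P_1)\preceq g_{K_{P_2}}(P_1)\preceq g_{K_{P_2}}(P_2)=f(P_2)$, using that the minimizer $K_P$ is a single gain for all $x$. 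Second, it makes the stabilizing gain's cost matrix $V$, which is exactly the paper's $\tilde W$, a supersolution: $f(V)\preceq g_{K_0}(V)=V$, hence $f^N(0)\preceq f^N(V)\preceq V$.

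What your route buys is economy: it needs only the one-period minimization (the content of Lemma~\ref{lem:optimalcontrol}), not multi-stage trajectory comparisons, and it yields monotonicity of $f$ as a standalone structural fact. The paper's route, in exchange, produces the finite-horizon value-function interpretation of $f^N(0)$, which it reuses for Theorem~\ref{thm:infinitehorizon} and Theorem~\ref{thm:globalconvergence}.

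Two small points. Your hedge about the stage cost is unnecessary: $\Phi_K^\top P\Phi_K+\int_0^\tau K^\top(s)Q_P(\tau-s)K(s)\,ds=\mathcal{T}_K^*(P)+\int_0^\tau K^\top RK\,ds$, so the min-formula is exact. Also, it is cleaner to minimize the unconstrained quadratic $\|P^{1/2}(e^{A\tau}x+\int_0^\tau e^{A(\tau-s)}Bv(s)\,ds)\|^2+\int_0^\tau v^\top Q_P(\tau-s)v\,ds$ directly. Your constrained least-squares formulation implicitly needs $M$ invertible, which fails when $(A,B)$ is not controllable.
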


\subsection{Connections with stochastic optimal control}

There is a natural connection between feedback stabilization and optimal control, as we elaborate below.   
To proceed, we first relax the class of linear feedback control laws by allowing for heterogeneous feedback gains. 
A linear feedback control law $u: [0,\infty) \to \R^m$ is said to be {\it admissible} if it takes the following form: 
$$u(t) = K_k(t - k\tau) x_{k\tau},$$
where $K_k\in \mathrm{L}^2([0,\tau), \R^{m\times n})$ for all~$k\in \N_0$ and $t\in [k\tau, (k + 1)\tau)$.  
We note, without a proof, that if $u(t)$ is admissible, then both $\bE[x_t]$ and $\bE[x_tx_t^\top]$ are finite for all $t\in [0,\infty)$. 
For convenience, we use $\mathcal{U}$ to denote the set of admissible linear feedback control laws. Further, for each $N\in \N_0$, we let 
$$\mathcal{U}_N:= \{u |_{[0,N\tau)} \mid u \in \mathcal{U}\}.$$

Next, consider a family of finite-horizon optimal control problems, parameterized by the horizon $N\tau$ for $N \in \N_0$. Specifically, we define the cost function as  

\begin{equation}\label{eq:finitehorizon}
J(x, u;N):=
\bE \Bigg [\tau \sum_{k = 0}^{N-1} x^\top_{k\tau}\, L \, x_{k\tau}
+ \int_0^{N\tau} u^\top(t) R u(t) dt
+ x_{N\tau}^\top\, P \, x_{N\tau} \mid x_0 = x\Bigg ],
\end{equation}
where $L$, $R$, and $P$ are positive definite matrices. Then, the  finite-horizon optimal control problem is given by
    \begin{equation}\label{eq:finiteoptimal}
        \min_{u\in \mathcal{U}_N} J(x, u; N) \quad \mbox{subject to system~\eqref{eq:systemmodel}}.
    \end{equation}
We have the following result:

\begin{theorem}\label{thm:finitehorizon} 
    Let $K_k(s;N)$ and $W_k(N)$ be given in Algorithm~\ref{alg:gainmatrix}.  
    Then, for any $x \in \R^n$, the optimal control problem~\eqref{eq:finiteoptimal} 
    has a unique minimizer ${u}^*_N\in \mathcal{U}_N$, which is given by
\begin{equation*}
    {u}^*_N(t) := K_{k}(t-k\tau;N) {x}_{k\tau},   
\end{equation*} 
for $t\in [k\tau, (k+1)\tau)$ and $k = 0,\ldots, N-1$. 
The minimized cost is given by:  
\begin{equation*}
    J^*(x; N) := J(x, u^*_N; N) = {x}^\top  W_0(N) {x}. 
\end{equation*}
\end{theorem}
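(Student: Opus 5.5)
Algorithm~\ref{alg:gainmatrix} has not been displayed yet. I take it to be the backward recursion $W_N(N) := P$, $W_k(N) := f(W_{k+1}(N))$, with
\[
K_k(s;N) := -Q_{k+1}^{-1}(\tau - s)\, B^\top e^{-A^\top s}\bigl(W_k(N) - \tau L\bigr),
\]
where $Q_{k+1}$ is defined as in~\eqref{eq:defmatrixMandQ} with $W = W_{k+1}(N)$. This is consistent with~\eqref{eq:stabilizingfeedbackgain}. The proof is by dynamic programming over the sampling intervals, with induction on the number of remaining intervals. The induction claim is that the optimal cost-to-go from time $k\tau$ with $x_{k\tau} = z$ equals $z^\top W_k(N) z$, and that it is uniquely attained by $K_k, \dots, K_{N-1}$.

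Because the feedback on $[k\tau,(k+1)\tau)$ uses only $x_{k\tau}$, and $w$ has independent increments, the problem decouples interval by interval. The key step is therefore a one-interval problem: given $x_0 = z$ and a terminal weight $W \in \bS^+_n$, minimize over $K \in \mathrm{L}^2([0,\tau),\R^{m\times n})$ the quantity
\[
\tau z^\top L z + \bE\Bigl[\int_0^\tau u^\top R u \, dt + x_\tau^\top W x_\tau\Bigr], \qquad u(t) = K(t) z .
\]

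First compute $x_\tau$ explicitly by variation of constants:
\[
x_\tau = e^{A\tau} z + \int_0^\tau e^{A(\tau-t)} B K(t) z\,dt + \sigma \int_0^\tau e^{A(\tau-t)} B K(t) z\, dw_t .
\]
Since $u$ is deterministic given $z$, the It\^o isometry gives the cost as a quadratic functional of $v(t) := K(t) z$:
\[
\tau z^\top L z + (e^{A\tau}z + \textstyle\int e^{A(\tau-t)}Bv\,dt)^\top W (e^{A\tau}z + \int e^{A(\tau-t)}Bv\,dt) + \int_0^\tau v^\top Q(\tau - t) v\, dt,
\]
with $Q$ as in~\eqref{eq:defmatrixMandQ}.

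This functional is strictly convex in $v \in \mathrm{L}^2$ because $Q > 0$. Setting its first variation to zero gives
\[
v(t) = -Q^{-1}(\tau-t) B^\top e^{A^\top(\tau - t)} W \xi, \qquad \xi := e^{A\tau}z + \int e^{A(\tau-t)}Bv\,dt .
\]
Substituting back yields $\xi = e^{A\tau} z - MW\xi$, after the change of variables $s = \tau - t$ in the definition of $M$. Hence $\xi = (I + MW)^{-1} e^{A\tau} z$.

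The minimal value is then $z^\top\bigl(e^{A^\top\tau} W (I+MW)^{-1} e^{A\tau} + \tau L\bigr) z$. The push-through identity $W(I+MW)^{-1} = W^{1/2}(I + W^{1/2}MW^{1/2})^{-1}W^{1/2}$ shows this equals $z^\top f(W) z$.

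To express the optimal gain in the stated form, note that $W\xi = W(I+MW)^{-1}e^{A\tau}z$. Using $e^{A^\top(\tau-t)} = e^{-A^\top t} e^{A^\top \tau}$ then gives $K(t) = -Q^{-1}(\tau - t) B^\top e^{-A^\top t}\bigl(f(W) - \tau L\bigr)$, which is linear in $z$ and matches the algorithm's formula. Uniqueness follows from strict convexity, for each $z$.

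For the induction step, write the cost from $k\tau$ as the stage cost on $[k\tau,(k+1)\tau)$ plus the conditional cost-to-go from $(k+1)\tau$. The control on later intervals depends only on $x_{(k+1)\tau}$, not on $z$ directly, so by the tower property and the induction hypothesis the tail is bounded below by $\bE[x_{(k+1)\tau}^\top W_{k+1} x_{(k+1)\tau}]$, with equality only for the optimal tail gains. The one-interval lemma with $W = W_{k+1}$ then finishes the step.

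The main obstacle is justifying that restricting to admissible laws loses nothing and that minimization can be done pointwise in $z$. The gains $K_k$ must work for all $x_{k\tau}$ simultaneously, while the minimization is over functions of $z$. This is resolved by the fact that the per-$z$ minimizer is linear in $z$. Uniqueness in $\mathcal{U}_N$ also needs care: $x_{k\tau}$ has a full-rank covariance for $k \ge 1$ when $\sigma \neq 0$, but not in general. I would handle this by requiring uniqueness of $K_k z$ only on the support of $x_{k\tau}$, and I expect the intended notion of uniqueness is exactly the one the strict convexity argument delivers.
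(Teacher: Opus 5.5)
Your proposal is correct and follows essentially the same route as the paper. Both arguments use dynamic programming over the sampling intervals and solve the one-interval problem by strict convexity and the first-order condition, which gives $\xi=(I+MW)^{-1}e^{A\tau}z$ exactly as in Lemma~\ref{lem:optimalcontrol}. You also make explicit what the paper leaves to ``computation'': the push-through identity for the minimal value, the rewriting of the gain, and the linearity in $z$ that turns the per-state minimization into a single feedback gain.

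One aside is inaccurate: $x_{k\tau}$ need not have full-rank covariance even when $\sigma\neq 0$ (for example, with $x=0$ or zero gains). Your resolution does not depend on that claim, though. Taking uniqueness to mean uniqueness of the control signal $K_k(\cdot)x_{k\tau}$ almost surely, rather than of the gains themselves, is exactly the uniqueness the paper's argument delivers.
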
 

\begin{algorithm}
\caption{Compute the gain matrices}
\label{alg:gainmatrix}
	\SetAlgoLined
	\KwIn{Matrices $A$, $B$, $L$, $R$, $P$, scalars $\sigma$ and $\tau$, and the horizon $N\tau$.}
	\KwOut{Matrices $W_k(N)$ and $K_k(s;N)$ for all $k = 0,\ldots, N-1$.}
	initialize
	\begin{equation*}
	W_{N}(N) := P.
	\end{equation*}
	\For{$k := N - 1$ to $0$}{
		\renewcommand{\arraystretch}{1.3}
		\begin{equation*}
		\begin{aligned}
		Q_k(s;N) & :=  R +\sigma^2 B^\top e^{A^\top s} W_{k + 1}(N) e^{A s}B, \\ 
		W_{k}(N) & :=  f(W_{k+1}(N)).
		\end{aligned} 
		\end{equation*}
		
		Obtain $K_k(s;N): [0,\tau)\to \R^{m\times n}$ by  
		\begin{equation*}
		K_{k}(s;N)  :=   - Q^{-1}_k(\tau - s;N) B^\top e^{- A^\top s} (W_k(N) - \tau L). 
		\end{equation*}
	}
\end{algorithm}

Note that the functions $K_k(s;N)$ are uniformly bounded (and continuous) and hence, belong to $\mathrm{L}^2([0,\tau), \R^{m\times n})$, so $u^*_N$ indeed belongs to $\mathcal{U}_N$.   

We now let $N$ go to infinity and consider the corresponding infinite-horizon optimal control problem. Specifically, let
\begin{equation}\label{eq:costfunction}
J(x, u):=  \bE\left [ \tau \sum_{k = 0}^\infty x_{k\tau}^\top \, L\,  x_{k\tau} + \int_0^\infty u^\top(t) R u(t) d t \mid x_0 = x\right ],
\end{equation}
where $L$ and $R$ are positive definite matrices. The infinite-horizon optimal control problem is then given by
\begin{equation}\label{eq:infiniteoptimal}
        \min_{u\in \mathcal{U}} J(x, u)  \quad \mbox{subject to system~\eqref{eq:systemmodel}}.
\end{equation}
The following result relates the existence of a solution to problem~\eqref{eq:infiniteoptimal} to the existence of the fixed point of~$f$ (and hence, to mean-square feedback stabilizability of system~\eqref{eq:systemmodel} through Theorem~\ref{thm:feedbackstabilizableandw}).

\begin{theorem}\label{thm:infinitehorizon}
    The following two items hold: 
    \begin{enumerate}
        \item If $f$ has a (unique) fixed point $W$, then for any $x\in \R^n$, the infinite-horizon optimal control problem~\eqref{eq:infiniteoptimal}
        admits a unique minimizer $u^*\in \mathcal{U}$ which is given by 
        \begin{equation}\label{eq:optimalcontrollawinfinitehorizon}
        {u}^*(t) = K(t-k\tau) {x}_{k\tau}, 
        \end{equation}
        for $t\in [k\tau, (k+1)\tau)$ and $k \in \N_0$, 
        where $K: [0,\tau) \to \R^{m\times n}$ is the stabilizing gain given in~\eqref{eq:stabilizingfeedbackgain}. The minimized cost is given by
        $$
        J^*(x):= J(x, u^*) = x^\top W x. 
        $$
        \item Conversely, if~\eqref{eq:infiniteoptimal} admits a solution for any $x\in \R^n$, then $f$ has a (unique) fixed point.  
    \end{enumerate}
\end{theorem}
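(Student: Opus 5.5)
The plan is to pass from the finite-horizon problems of Theorem~\ref{thm:finitehorizon} to the infinite horizon, using the monotone convergence $f^N(P)\to W$ from Theorem~\ref{thm:globalconvergence}.

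\textbf{Item 1.} Assume $f$ has the fixed point $W$, and let $u^*$ be the law \eqref{eq:optimalcontrollawinfinitehorizon}.

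\emph{Cost of $u^*$.} Within one sampling interval, $u^*$ coincides with the first-step gain of Algorithm~\ref{alg:gainmatrix} when the terminal weight is $P=W$. In that case every $W_k(N)$ equals $W$, because $f(W)=W$. Theorem~\ref{thm:finitehorizon} then applies for every $N$ and gives
$$J(x,u^*|_{[0,N\tau)};N)\big|_{P=W}=x^\top W x.$$
Write $S_N$ for the running cost accumulated up to time $N\tau$. The identity above reads $\bE[S_N]+\bE[x_{N\tau}^\top W x_{N\tau}]=x^\top W x$. By Theorem~\ref{thm:feedbackstabilizableandw}, $K$ is stabilizing, so $\bE\|x_{N\tau}\|^2\to0$. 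Letting $N\to\infty$ gives $J(x,u^*)=x^\top W x$.

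\emph{Lower bound for an arbitrary $u\in\mathcal U$.} Take the finite problem with terminal weight $P=\epsilon I$, for small $\epsilon>0$. Since $J(x,u)\ge \bE[S_N]$, we get
$$J(x,u)\ \ge\ J(x,u;N)-\epsilon\,\bE\|x_{N\tau}\|^2\ \ge\ x^\top f^N(\epsilon I)x-\epsilon\,\bE\|x_{N\tau}\|^2 .$$
If $J(x,u)<\infty$, the sum $\sum_k \bE\|x_{k\tau}\|^2$ is finite because $L>0$, so $\bE\|x_{N\tau}\|^2\to0$ along the sequence. By Theorem~\ref{thm:globalconvergence}, $f^N(\epsilon I)\to W$. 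Hence $J(x,u)\ge x^\top W x$. (A direct argument with $P=0$ would also work, but the theorems are stated for $P>0$, which is why I use $\epsilon I$.)

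\emph{Uniqueness.} I will use a completion-of-squares identity. Summing the one-step verification identities from the proof of Theorem~\ref{thm:finitehorizon} should give
$$J(x,u;N)\big|_{P=W}=x^\top Wx+\sum_{k<N}\bE\!\int_0^\tau \bigl(v-v^*\bigr)^\top Q\,\bigl(v-v^*\bigr)\,ds,$$
where $v$ is the control applied on the $k$-th interval and $v^*$ is the optimal one. Since $Q>0$, any other minimizer must make every such term vanish, and therefore coincides with $u^*$ almost everywhere. This step requires the paper's one-step identity explicitly. I expect it, together with justifying the tail limit, to be the main technical work.

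\textbf{Item 2.} Suppose \eqref{eq:infiniteoptimal} has a solution for every $x$. In particular $V(x):=\inf_{u\in\mathcal U} J(x,u)<\infty$ for every $x$.

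\emph{An upper bound.} Take $P=\epsilon I$. The iterates $W_0(N)=f^N(\epsilon I)$ are bounded above, because
$$x^\top f^N(\epsilon I)x\le J(x,u;N)\le J(x,u)+\epsilon\,\bE\|x_{N\tau}\|^2$$
holds for the optimal $u$. The last term tends to zero since $J(x,u)<\infty$, so it is uniformly bounded in $N$.

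\emph{Extracting a fixed point.} Next I need monotonicity of $f$, namely $P_1\le P_2\Rightarrow f(P_1)\le f(P_2)$. This should follow from the value-function interpretation in Theorem~\ref{thm:finitehorizon}, since a larger terminal cost gives a larger optimal cost. Starting from $P_0=0$, or arguing with a limit in $\epsilon$, we have $f(0)=\tau L\ge 0$. So the iterates $f^N(0)$ are nondecreasing and bounded, hence they converge to some $W$. Because $f$ is continuous, $W=f(W)$. Uniqueness of this fixed point comes from Theorem~\ref{thm:globalconvergence}.

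The delicate point here is that $P=0$ lies outside the hypotheses of Theorem~\ref{thm:finitehorizon}. I will handle it with the $\epsilon I$ comparison, $f^N(0)\le f^N(\epsilon I)$, which again uses only monotonicity of $f$. Alternatively, Item 2 follows at once from Theorem~\ref{thm:feedbackstabilizableandw}(2) if I can show that the optimal law yields a stabilizing stationary gain, but the bounded-monotone-iterate route avoids that step.
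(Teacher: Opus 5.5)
Your proposal is correct. Item~2 follows the paper's skeleton. Item~1 takes a partly different route.

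\textbf{Comparison on item~1.} The paper handles item~1 in one stroke. For a finite-cost $u$ it telescopes $x^\top Wx$ into the cost and writes $J(x,u)=x^\top Wx+\sum_k\bE[a_k]$. It then reruns the one-step analysis of Lemma~\ref{lem:optimalcontrol} with $W$ in place of $W_{k+1}(N)$ to get $a_k\ge 0$, with equality iff $u=u^*$ on that interval. The value, the lower bound and uniqueness all come out together. You split this into three steps:
\begin{enumerate}
\item[(a)] you read off the value from the finite-horizon problem with terminal weight $P=W$, where $f(W)=W$ makes Algorithm~\ref{alg:gainmatrix} stationary; this is a neat observation, equivalent to $a_k(u^*)=0$;
\item[(b)] you get the lower bound from $f^N(\epsilon I)\to W$ via Theorem~\ref{thm:globalconvergence};
\item[(c)] you get uniqueness from a summed completion-of-squares identity, which is exactly the paper's decomposition.
\end{enumerate}

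\textbf{Step (b) is redundant and logically delicate.} It is redundant because every summand in (c) is nonnegative. So (c) together with $\bE\|x_{N\tau}\|^2\to 0$ already gives $J(x,u)\ge x^\top Wx$. It is delicate because the paper proves Theorem~\ref{thm:globalconvergence} by citing item~1 of the very theorem you are proving. This is not a genuine circle only because that proof uses nothing but the value formula $J(x,u^*)=x^\top Wx$ (plus Theorem~\ref{thm:feedbackstabilizableandw}), and your step (a) establishes that formula independently. You must either make the order (a), then Theorem~\ref{thm:globalconvergence}, then (b) explicit, or simply drop (b).

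On the positive side, your handling of the tail is more explicit than the paper's bare claim that an optimal law lies in $\mathcal{U}^*$. You note that finite cost and $L>0$ force $\sum_k\bE\|x_{k\tau}\|^2<\infty$. This decay at the sampling instants is exactly what the telescoping needs, and you apply it to every finite-cost $u\in\mathcal{U}$ rather than only to a presumed minimizer.

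\textbf{The identity in (c) is not exact.} With $x_{k\tau}$ fixed, $a_k$ is quadratic in $v$. Its quadratic part is $\int_0^\tau v^\top Q(\tau-s)v\,ds+\delta_v^\top W\delta_v$, where $\delta_v:=\int_0^\tau e^{A(\tau-s)}Bv(s)\,ds$. The excess over $v^*$ is therefore $\int_0^\tau (v-v^*)^\top Q(\tau-s)(v-v^*)\,ds+\delta_{v-v^*}^\top W\delta_{v-v^*}$. The term you dropped is nonnegative, so you get an inequality, and that is all your argument uses.

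\textbf{Item~2: the $\epsilon I$ detour is unnecessary.} The dynamic-programming proof of Theorem~\ref{thm:finitehorizon} uses only $R>0$, so that $Q_k>0$, and it holds verbatim for $P\ge 0$. This is how the paper uses $P=0$ in Lemma~\ref{lem:monotonicityofW}. If you keep your route, order-preservation of $f$ at $P_1=0$ needs either this remark or continuity of $f$ on $\bS^+_n$. With that in place, deriving monotonicity of $f^N(0)$ from order-preservation of $f$ and $f(0)=\tau L\ge 0$ is an equally valid alternative to the paper's truncation argument.
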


\subsection{Numerical study}
In this subsection, we conduct a numerical study for feedback stabilization of system~\eqref{eq:systemmodel}, which is complementary to the above theoretical results and sheds light on the question posed at the end of Section~\ref{sec:intro}. 
We carry out two sets of simulations. For both settings, we let 
\begin{equation}\label{eq:defAi}
A_i :=  \frac{(i-1)}{5}I + \frac{1}{5}\operatorname{diag}(0,1,2,3,4),
\end{equation} 
for $i = 1,2,3,4,5$. We choose two different $B$ matrices, with $B = I$ and $B = \bfo$ (i.e., the vector of all ones). Then, for each pair $(A_i, B)$ and for each $\tau$ (spaced $0.2$ apart), we search the maximum $\sigma^2$ (binary search with tolerance $\epsilon = 10^{-4}$) such that $f^N(0)$ converges. We declare convergence if $\|f^{N+1}(0) - f^N(0)\| \leq 10^{-5} \|f^N(0)\|$, and divergence if  $\|f^N(0)\| \geq 10^8$ or if $N\geq 2\cdot 10^5$. The corresponding $(\tau,\sigma^2)$ curves, for the two setups, are shown in Fig.~\ref{fig:identity} and Fig.~\ref{fig:one}, respectively.

\begin{figure}[ht]
    \centering
    \includegraphics[width=0.85\linewidth]{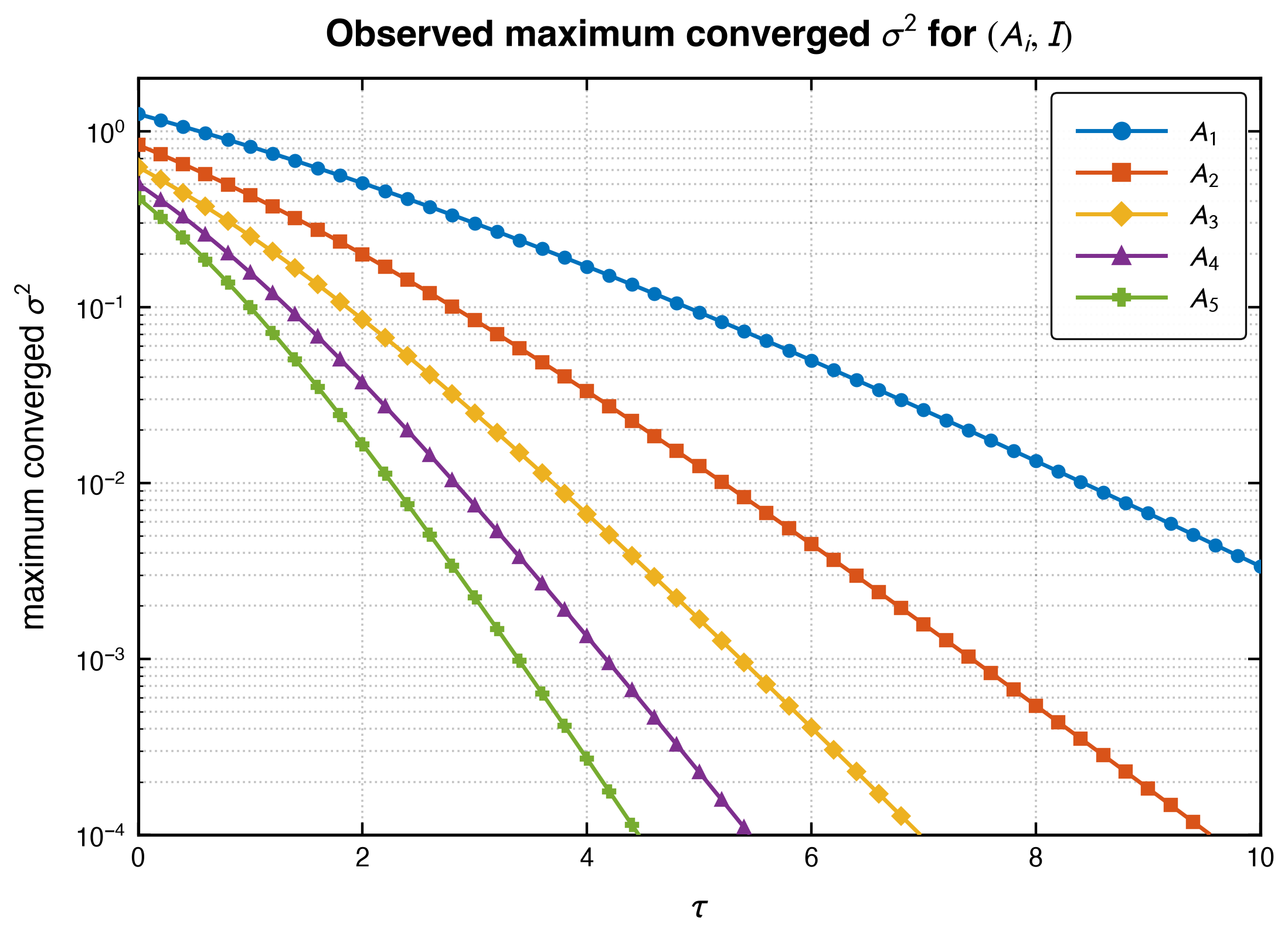}
    \caption{\unboldmath The $(\tau,\sigma^2)$ curves for $(A_i, I)$, with $A_i$ defined in~\eqref{eq:defAi}. The vertical axis is plotted in the log-scale.}
    \label{fig:identity}
\end{figure}

\begin{figure}[htbp]
    \centering
    \includegraphics[width=0.85\linewidth]{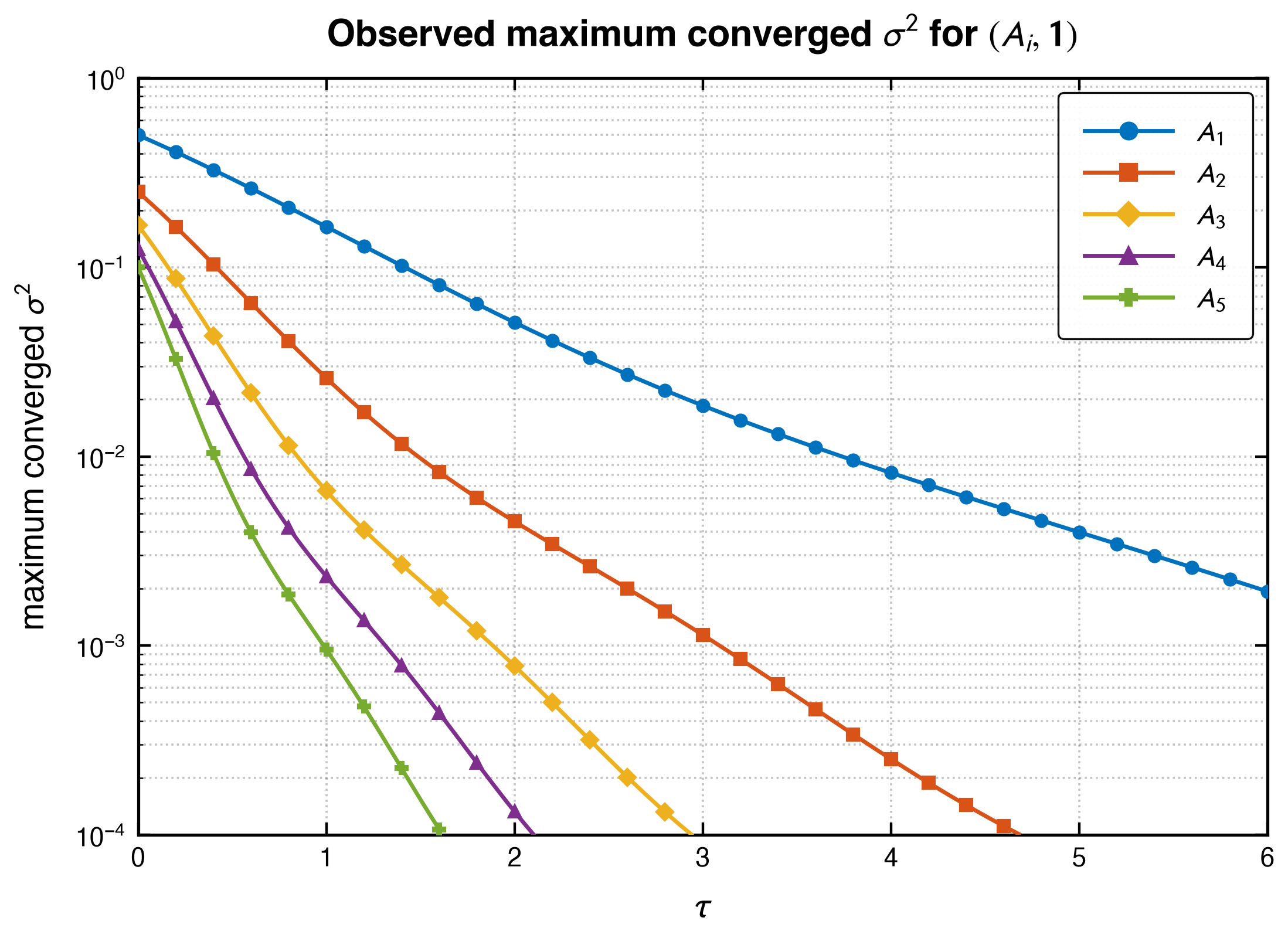}
    \caption{\unboldmath The $(\tau,\sigma^2)$ curves for $(A_i, \bfo)$, with $A_i$ defined in~\eqref{eq:defAi}. The vertical axis is plotted in the log-scale.}
    \label{fig:one}
\end{figure}

\section{Analysis and Proofs}\label{sec:proofs}
This section is dedicated to the proofs of the four theorems presented in Section~\ref{sec:mainresults}, and is organized as follows: 
\begin{enumerate}
\item First, in Subsection~\ref{ssec:finitehorizon}, we investigate the finite-horizon optimal control problem~\eqref{eq:finiteoptimal},  establish Theorem~\ref{thm:finitehorizon}, and validate Algorithm~\ref{alg:gainmatrix}. 
Part of the analysis serves as a cornerstone for proving the other theorems. 
\item Next, in Subsection~\ref{ssec:sufficiency}, we show that existence of a fixed point of $f$ is sufficient for mean-square feedback stabilizability of system~\eqref{eq:systemmodel} (item~1 of Theorem~\ref{thm:feedbackstabilizableandw}) and for existence and uniqueness of a solution to the optimal control problem~\eqref{eq:infiniteoptimal} (item~1 of Theorem~\ref{thm:infinitehorizon}). The proof does not rely on the uniqueness of the fixed point of~$f$. 
\item Then, in Subsection~\ref{ssec:necessity}, we establish the necessity part (i.e., item~2 of Theorem~\ref{thm:feedbackstabilizableandw} and item~2 of Theorem~\ref{thm:infinitehorizon}). 
\item Finally, in Subsection~\ref{ssec:convergence}, we establish Theorem~\ref{thm:globalconvergence}, the proof of which builds upon the connections between the map $f$,  feedback stabilizability of system~\eqref{eq:systemmodel}, and the solutions to the optimal control problems.    
\end{enumerate}

\subsection{Proof of Theorem~\ref{thm:finitehorizon}}\label{ssec:finitehorizon}

For ease of notation, we will suppress in this subsection the argument $N$ and simply write $W_k$, $Q_k(s)$, and $K_k(s)$. 
The proof builds upon dynamic programming. For any $k = 0,\ldots,N$, let $V^*_k(x)$ be the value-to-go function at time $k\tau$ with $x_{k\tau} = x$. 
The boundary condition for the dynamic programming is simply 
$V_N^*(x) = x^\top P x$.  
The update rule is given by
\begin{equation}\label{eq:updaterule}
V^*_{k}(x)= \min_{u |_{[k\tau,(k+1)\tau)}} \Bigg [ \tau x_{k\tau}^\top \, L \,  x_{k\tau}  + \int_{k\tau}^{(k+1)\tau} u^\top(t)  R  u(t) dt  + \bE \left [V^*_{k+1}(x_{(k+1)\tau}) \right ] \mid x_{k\tau} = x \Bigg ], 
\end{equation}
for any $k = 0,\ldots, N - 1$. 

Assuming that $V^*_{k+1}(x) = x^\top W_{k + 1} x $, we need to show  that 
$u^*_N |_{[k\tau, (k+1)\tau)}$ is the unique minimizer that solves the optimal control problem on the right hand side of~\eqref{eq:updaterule}, where $u^*_N$ is given in the statement of Theorem~\ref{thm:finitehorizon} and, consequently, $V^*_{k}(x) = x^\top W_k x$.

We first express the term $\bE[V^*_{k+1}(x_{(k+1)\tau})]$ as an explicit function of $u|_{[k\tau,(k+1)\tau)}$.  
We need the following lemma:

\begin{lemma}\label{lem:meanandconvariance}
    For any $t\in [k\tau, (k+1)\tau]$, let 
\begin{equation}\label{eq:defbarxandcovariance}
\bar x(t) := \bE[ x_{t} \mid x_{k\tau} = x ], \,\,
\Sigma(t) := \mathrm{Cov}\left (x_t \mid x_{k\tau}=x \right ).
\end{equation}
Then, 
\begin{align}
\bar x((k+1)\tau)
&= e^{A\tau}x
+ \int_{k\tau}^{(k+1)\tau}
v(t) dt,
\label{eq:mean}\\
\Sigma((k+1)\tau)
&= \sigma^2 \int_{k\tau}^{(k+1)\tau}
v(t) v^\top(t) dt,
\label{eq:covariance}
\end{align}
where $v(t):= e^{A((k+1)\tau-t)} B u(t)$. 
\end{lemma}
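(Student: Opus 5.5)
On $[k\tau,(k+1)\tau)$ the control is $u(t)=K_k(t-k\tau)x_{k\tau}$. Once we condition on $x_{k\tau}=x$, it is a deterministic, square-integrable function of time. The plan is to solve the SDE explicitly by variation of constants and then read off the first two moments using the Itô isometry.

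\textbf{Step 1: explicit solution.} Conditioning on $x_{k\tau}=x$, we apply Itô's formula to $e^{-A t}x_t$. The noise term is $B u(t)\sigma\,dw_t$ and does not involve $x_t$, so the equation is linear with additive (time-varying) noise. This gives, for $t\in[k\tau,(k+1)\tau]$,
$$x_t = e^{A(t-k\tau)}x + \int_{k\tau}^{t} e^{A(t-s)}Bu(s)\,ds + \sigma\int_{k\tau}^{t} e^{A(t-s)}Bu(s)\,dw_s.$$
The stochastic integral is well defined because $u\in \mathrm{L}^2$ on this interval and $e^{A(t-s)}$ is bounded. Strong uniqueness of the solution follows from the standard Lipschitz theory, since the drift is linear and the diffusion coefficient does not depend on the state. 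We then evaluate at $t=(k+1)\tau$.

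\textbf{Step 2: mean.} The integrand $e^{A((k+1)\tau-s)}Bu(s)$ is deterministic given $x_{k\tau}$ and lies in $\mathrm{L}^2$. Hence the Itô integral has zero mean, and taking conditional expectation yields \eqref{eq:mean} with $v(t)=e^{A((k+1)\tau-t)}Bu(t)$.

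\textbf{Step 3: covariance.} Subtracting the mean leaves only the term $\sigma\int_{k\tau}^{(k+1)\tau} v(s)\,dw_s$. Since $w$ is scalar, the matrix version of the Itô isometry gives
$$\bE\Big[\Big(\int v\,dw\Big)\Big(\int v\,dw\Big)^{\top}\Big]=\int v v^\top\,ds.$$
This yields \eqref{eq:covariance}. The same computation works for general $t$ in the interval, but the statement only needs the endpoint.

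\textbf{Main obstacle.} The only delicate point is justifying that $u$ is deterministic after conditioning. This holds because $u$ depends only on the sampled value $x_{k\tau}$, and the Wiener increments on $[k\tau,(k+1)\tau)$ are independent of $\mathcal{F}_{k\tau}$. This independence is what makes the zero-mean and isometry properties hold conditionally. I would state this explicitly using the Markov property. Everything else is routine.
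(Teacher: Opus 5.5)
Your proof is correct, but it takes a different route from the paper's. The paper never writes down $x_t$ explicitly. It applies It\^o's rule to $x_t$ and $x_tx_t^\top$ and takes expectations, which gives the deterministic moment equations $\dot{\bar x} = A\bar x + Bu$ and $\dot\Sigma = A\Sigma + \Sigma A^\top + \sigma^2 Buu^\top B^\top$, with $\bar x(k\tau)=x$ and $\Sigma(k\tau)=0$. It then uses variation of constants on these ODEs.

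You instead apply variation of constants to the SDE itself. This works because, once $x_{k\tau}$ is fixed, the noise $\sigma Bu(t)\,dw_t$ is additive. You then read off the moments from the zero-mean property and the It\^o isometry for a deterministic $\mathrm{L}^2$ integrand.

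What your route buys:
\begin{itemize}
\item It gives more information: the conditional law of $x_t$ given $x_{k\tau}$ is Gaussian.
\item It makes explicit the conditioning/independence step and the integrability that lets the stochastic integral have zero mean. The paper's ``expectation rule'' leaves both implicit; there one needs the local-martingale terms in $d(x_tx_t^\top)$ to be true martingales.
\end{itemize}

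What the paper's route buys:
\begin{itemize}
\item It is shorter and exhibits $\Sigma(t)$ on the whole interval, which the paper reuses for $G$ and $H$.
\item It extends to noise that depends on the current state. There no simple pathwise solution is available, but the second-moment equations still close.
\end{itemize}

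One small remark on your Step~1: you do not need Lipschitz theory for uniqueness. Textbook existence/uniqueness theorems typically assume coefficients bounded uniformly in $t$, whereas $u$ here is only $\mathrm{L}^2$. Uniqueness is immediate anyway: the difference of two solutions satisfies $dy = Ay\,dt$ with $y(k\tau)=0$, hence vanishes.
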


\begin{proof}
It follows from the It{\^ o} rule and the expectation rule that $\bar x(t)$ and $\Sigma(t)$ obey the following linear ordinary differential equations: 
$$
\left\{
\begin{aligned}
\dot{\bar x}(t) & = A \bar x(t) + B u(t), \\
\dot \Sigma(t) & = A\Sigma(t) + \Sigma(t) A^\top + \sigma^2 B u(t)u^\top(t) B^\top,  
\end{aligned} 
\right.
$$
with $\bar x(k\tau)  = x$ and $\Sigma(k\tau)  = 0$, 
whose solutions at $t = (k+1)\tau$ are given by~\eqref{eq:mean} and~\eqref{eq:covariance}, respectively. 
\end{proof}

By the hypothesis that $V^*_{k+1}(x) = x^\top W_{k+1} x$ and Lemma~\ref{lem:meanandconvariance}, 
we obtain that
\begin{equation}
    \bE[V^*_{k+1}(x_{(k+1)\tau}) \mid x_{k\tau}]  
    = \bar x^\top ((k+1)\tau) W_{k+1} \bar x((k+1)\tau)    
    + \sigma^2\int_{k\tau}^{(k+1)\tau} v^\top(t) W_{k+1} v(t) dt. \label{eq:expectationofvstar}
\end{equation}

Leveraging Lemma~\ref{lem:meanandconvariance} and~\eqref{eq:expectationofvstar}, we solve~\eqref{eq:updaterule} in the following lemma, which then concludes the proof of Theorem~\ref{thm:finitehorizon}:

\begin{lemma}\label{lem:optimalcontrol}
The optimal control problem~\eqref{eq:updaterule} has a unique minimizer, which is given by $u^*_N |_{[k\tau,(k+1)\tau)}$. 
Moreover, the minimal value is $V^*_k(x) = x^\top W_k x$. 
\end{lemma}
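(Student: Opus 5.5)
Using Lemma~\ref{lem:meanandconvariance} and~\eqref{eq:expectationofvstar}, the problem on the right-hand side of~\eqref{eq:updaterule} becomes a deterministic quadratic minimization over $u\in \mathrm{L}^2([k\tau,(k+1)\tau),\R^m)$. Shift time so that $s := (k+1)\tau - t\in(0,\tau]$ and write $G(s):= e^{As}B$, so that $v(t) = G(s)u(t)$. Up to the constant $\tau x^\top L x$, the cost is
\begin{equation*}
\Phi(u) := \int_0^\tau u^\top Q_k(s) u \, ds + \Big(e^{A\tau}x + \int_0^\tau G(s)u\,ds\Big)^\top W_{k+1}\Big(e^{A\tau}x + \int_0^\tau G(s)u\,ds\Big),
\end{equation*}
because $u^\top R u + \sigma^2 v^\top W_{k+1} v = u^\top Q_k(s) u$. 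Since $Q_k(s)\ge R>0$ uniformly, the first term is a coercive, strictly convex quadratic form on $\mathrm{L}^2$, and the second term is convex. Hence $\Phi$ is strictly convex and coercive, so it has a unique minimizer, characterized by the vanishing of its Fréchet derivative.

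The first-order condition is $Q_k(s)u(s) + G^\top(s) W_{k+1}\bar x = 0$ a.e., where $\bar x := e^{A\tau}x + \int_0^\tau G u$ is the terminal mean. This gives $u(s) = -Q_k^{-1}(s)G^\top(s)W_{k+1}\bar x$. Substituting back yields the linear equation $\bar x = e^{A\tau}x - M W_{k+1}\bar x$, with $M$ as in~\eqref{eq:defmatrixMandQ} evaluated at $W=W_{k+1}$. Hence $\bar x = (I+MW_{k+1})^{-1}e^{A\tau}x$, and $I+MW_{k+1}$ is invertible because $MW_{k+1}$ is similar to the positive semidefinite matrix $W_{k+1}^{1/2}MW_{k+1}^{1/2}$. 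To evaluate the minimum, I would use the standard identity for quadratics at their stationary point. Together with $\int_0^\tau u^\top Q_k u = \bar x^\top W_{k+1} M W_{k+1}\bar x$, it gives $\Phi^* = x^\top e^{A^\top\tau}W_{k+1}(I+MW_{k+1})^{-1}e^{A\tau}x$. The push-through identity $W(I+MW)^{-1} = W^{1/2}(I+W^{1/2}MW^{1/2})^{-1}W^{1/2}$ then converts this into the form in~\eqref{eq:deffunctionf}. Adding $\tau x^\top L x$ gives $V_k^*(x) = x^\top f(W_{k+1})x = x^\top W_k x$.

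It remains to identify the minimizer with $K_k$ from Algorithm~\ref{alg:gainmatrix}, which I expect to be the main bookkeeping obstacle. We need
\begin{equation*}
G^\top(s)W_{k+1}\bar x = B^\top e^{-A^\top(\tau-s)}(W_k-\tau L)x,
\end{equation*}
where in the algorithm's variable $s' = t-k\tau = \tau - s$ one has $e^{A^\top s} = e^{A^\top\tau}e^{-A^\top s'}$. So it suffices to show $e^{A^\top \tau}W_{k+1}(I+MW_{k+1})^{-1}e^{A\tau} = W_k - \tau L$, which is exactly the previous computation since $W_k - \tau L = f(W_{k+1})-\tau L$. This gives $u(t) = -Q_k^{-1}(\tau - s')B^\top e^{-A^\top s'}(W_k-\tau L)x = K_k(s';N)x$, matching $u_N^*$.

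Uniqueness of the minimizer comes from strict convexity; the minimizer is unique as an element of $\mathrm{L}^2$, i.e., almost everywhere. By backward induction from $V_N^* = x^\top P x$, the lemma holds for all $k$, which concludes the argument.
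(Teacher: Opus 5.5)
Your proposal is correct and follows essentially the same route as the paper. Both arguments pass to the enlarged space of $\mathrm{L}^2$ controls, use strict convexity and coercivity for existence and uniqueness, solve the first-order condition to get $\bar x((k+1)\tau) = (I+M_kW_{k+1})^{-1}e^{A\tau}x$, and match the resulting control with $K_k$. You additionally carry out, correctly, the steps the paper leaves to ``computation'': the stationary-point value via $\int u^\top Q_k u = \bar x^\top W_{k+1}M_kW_{k+1}\bar x$, and the push-through identity that yields $f(W_{k+1})$.
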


\begin{proof}
We consider all possible $\mathrm{L}^2$-integrable functions $v:[k\tau, (k+1)\tau)\to \R^m$ and show that $u^*_N |_{[k\tau,(k+1)\tau)}$ is still the unique minimizer in this possibly enlarged optimization space.  
First, note that the objective function is strictly convex, Frech\'et differentiable, and coercive, so there exists at least a minimizer~$v$. 
It follows from the first-order analysis on the right hand side of~\eqref{eq:updaterule}, together with Lemma~\ref{lem:meanandconvariance} and~\eqref{eq:expectationofvstar}, that any such minimizer $v$ must satisfy the following condition: 
\begin{equation*}
v(t) = - Q_k^{-1}((k+1)\tau - t) B^\top e^{A^\top((k+1)\tau - t)} W_{k+1} \bar x((k+1)\tau),
\end{equation*}
where $Q_k$ is introduced in Algorithm~\ref{alg:gainmatrix}.   
Combining the above equation with~\eqref{eq:mean}, we obtain by computation that
\begin{equation}\label{eq:xstar}  
\bar x((k+1)\tau)  = [I + M_k W_{k+1}]^{-1} e^{A\tau} x,  
\end{equation}
where 
$$
M_{k} :=  \displaystyle\int_{0}^{\tau}  e^{As}B Q_k^{-1}(s) B^\top e^{A^\top s} ds. 
$$
Then, using~\eqref{eq:xstar} and the definition of $K_k$ in Algorithm~\ref{alg:gainmatrix}, we have that $v = u^*_N |_{[k\tau,(k+1)\tau)}$.  
Finally, by computation, we conclude that $V_k^*(x) = x^\top W_{k} x$.  
\end{proof}

\subsection{Proof of Sufficiency}\label{ssec:sufficiency}
In this subsection, we establish item~1 of Theorem~\ref{thm:feedbackstabilizableandw} and item~1 of Theorem~\ref{thm:infinitehorizon}. 
Specifically, we show that if the map $f$ has a fixed point $W$ (note again that any such $W$ is positive definite), then system~\eqref{eq:systemmodel} is mean-square feedback stabilizable and, moreover, the infinite-horizon optimal control problem~\eqref{eq:infiniteoptimal} has $u^*\in \mathcal{U}$ as the  unique minimizer. We will fix such a $W$ for the remainder of the subsection.

\subsubsection{Proof of item~1 of Theorem~\ref{thm:feedbackstabilizableandw}}

Let $K(s)$ be given as in~\eqref{eq:stabilizingfeedbackgain}, and $u^*(t) = K(t - k\tau)x_{k\tau}$, for $t\in [k\tau, (k+1)\tau)$ and $k\in \N_0$, be given as in~\eqref{eq:optimalcontrollawinfinitehorizon}. 
We show that $\lim_{t\to\infty}\mathbb{E}\|x_t\|^2 = 0$.  

For convenience, let $z_k := x^\top_{k\tau} W x_{k\tau}$.   
We claim that $\lim_{k\to\infty}\bE[z_k] = 0$. 
To wit, let $\bar x((k+1)\tau)$ and $\Sigma((k+1)\tau)$ be given as in~\eqref{eq:defbarxandcovariance}. Then, 
$$
\bE{[z_{k+1}]} = \bE[\bar x^\top((k+1)\tau) W \bar x((k+1)\tau) + \tr(W\Sigma((k+1)\tau)) ],
$$
where the expectation on the right hand side is with respect to $x_{k\tau}$. 
Using the fact that $W$ is a fixed point of $f$, we obtain by computation that 
\begin{equation*}
\bE[z_{k+1}]  = \bE\left [ z_k \right ] - 
   \bE\left [ x_{k\tau}^\top \left (\tau L + \int_0^\tau K^\top(s) R K(s) ds \right ) x_{k\tau} \right ].   
\end{equation*}
It follows that the nonnegative sequence $\{\bE[z_k]\}_{k\in \N_0}$ is monotonically decreasing, so it converges to some nonnegative real number. In particular, the second term on the right hand side of the above equation converges to $0$. 
Since the matrix $(\tau L + \int_0^\tau K^\top(s) R K(s) ds)$ is positive definite, $\lim_{k\to\infty} \bE[\|x_{k\tau}\|^2] = 0$, which establishes the claim.

We now show that $\lim_{t\to\infty}\bE[\|x_t\|^2] = 0$. Define two matrix-valued functions $G, H:[0,\tau)\to \R^{n\times n}$ as follows: 
\begin{equation}\label{eq:defGandH}
\begin{aligned}
G(s) & := e^{As} + \int_0^s e^{A(s - r)} BK(r) dr, \\
H(s) & := \sigma^2 \int_0^s K^\top(r) B^\top e^{A^\top (s - r)}e^{A(s - r)} B K(r) dr. 
\end{aligned}
\end{equation}
It is clear that $G(s)$ and $H(s)$ are uniformly bounded.  
By Lemma~\ref{lem:meanandconvariance}, we have that for any $t\in [k\tau, (k+1)\tau)$, 
$$
\bar x(t)  = G(t-k\tau) x_{k\tau} \mbox{ and }
\tr(\Sigma(t))  = x^\top_{k\tau} H(t-k\tau) x_{k\tau}.
$$
Furthermore, we have that
\begin{equation*}
\bE[\|x_{t}\|^2  ] = \bE [ \|\bar x(t)\|^2] + \tr(\bE [\Sigma(t)  ]) =  \bE[x^\top_{k\tau} (G^\top(t - k\tau) G(t-k\tau) + H(t - k\tau)) x_{k\tau}]. 
\end{equation*}
Since $G(s)$ and $H(s)$ are uniformly bounded and since 
$\lim_{k\to\infty}\bE[\|x_{k\tau}\|^2] = 0$, we conclude that  $\lim_{t\to\infty}\bE[\|x_t\|^2] = 0$.  
\hfill{\qed}

\subsubsection{Proof of item~1 of Theorem~\ref{thm:infinitehorizon}}
Let $\mathcal{U}^*$ be the subset of $\mathcal{U}$ such that if $u\in \mathcal{U}^*$, then $\lim_{t\to\infty}\bE[\|x_t\|^2] = 0$. The set $\mathcal{U}^*$ is nonempty because it contains the feedback control law $u^*$ as shown above.  
If a control law $u\in \mathcal{U}$ solves the infinite-horizon optimal control problem~\eqref{eq:infiniteoptimal}, then it necessarily belongs to $\mathcal{U}^*$. We fix any such $u$ and add to the cost $J(x, u)$ given in~\eqref{eq:costfunction} the following trivial identity:
$$
x_0^\top W x_0 - \sum_{k = 0}^\infty \left [ \bE[x_{k\tau}^\top W x_{k\tau}] -  \bE[x_{(k+1)\tau}^\top W x_{(k+1)\tau}] \right ] = 0.
$$
We then obtain 
\begin{equation*}
J(x, u) = x^\top W x + 
\sum_{k = 0}^\infty \bE\Bigg [ x_{k\tau}^\top (\tau L - W) x_{k\tau} + 
\int_{k\tau}^{(k+1)\tau} u^\top (t) R u(t) dt + \bE[x_{(k+1)\tau}^\top W x_{(k+1)\tau} \mid x_{k\tau}] 
\Bigg ],
\end{equation*}
where the outside expectation is with respect to $x_{k\tau}$. 
For convenience, let $a_k$ be the term in the bracket of the above equation, so we can write $J(x,u) = x^\top W x + \sum_{k = 0}^\infty \bE[a_k]$. 
With $x_{k\tau}$ fixed, $a_k$ is a function of $u|_{[k\tau, (k+1)\tau)}$. 
Using the same arguments in the proof of Lemma~\ref{lem:optimalcontrol} (with $W_k(N)$, $Q_k(s;N)$, and  $K_k(s;N)$ replaced by $W$, $Q(s)$, and $K(s)$, respectively), we obtain that  
$\arg\min_{v} a_k(v) = u^* |_{[k\tau, (k+1)\tau)}$, 
where the argument $v$ is taken from the possibly larger optimization space $\mathrm{L}^2([k\tau,(k+1)\tau),\R^m)$. 
Moreover, by computation, the minimal value is $a_k(u^* |_{[k\tau, (k+1)\tau)}) = 0$. 
It then follows that $J(x, u) \geq J(x, u^*) = x^\top W x$ and the equality holds if and only if $u = u^*$. 
\hfill{\qed}

\subsection{Proof of Necessity}\label{ssec:necessity}
In this subsection, we establish item~2 of Theorem~\ref{thm:feedbackstabilizableandw} and item~2 of Theorem~\ref{thm:infinitehorizon}. The proof relies on two key lemmas as we outline below:
\begin{enumerate}
    \item Recall that $W_0(N) = f^N(P)$. We show in Lemma~\ref{lem:monotonicityofW} that if $P = 0$, then $W_0(N)$ is monotonically increasing in $N$. We will see soon that item~2 of Theorem~\ref{thm:infinitehorizon} is an immediate consequence of the lemma. 
    \item In Lemma~\ref{lem:finitecost}, we show that if system~\eqref{eq:systemmodel} is mean-square feedback stabilizable, then $\{W_0(N)\}_{N\in \N_0}$ is uniformly bounded above. Together with Lemma~\ref{lem:monotonicityofW}, they establish item~2 of Theorem~\ref{thm:feedbackstabilizableandw}.    
\end{enumerate}

\subsubsection{Proof of item~2 of Theorem~\ref{thm:infinitehorizon}}
We start with the following lemma (the arguments are standard, and we include a short proof for completeness of presentation):

\begin{lemma}\label{lem:monotonicityofW}
   The sequence $\{f^N(0)\}_{N\in \N_0}$ is monotonically increasing.
\end{lemma}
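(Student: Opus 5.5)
We argue through the optimal control interpretation in Theorem~\ref{thm:finitehorizon} rather than through the algebraic form of $f$. Take $P = 0$ in the cost~\eqref{eq:finitehorizon}. Theorem~\ref{thm:finitehorizon} is stated for $P>0$, but the dynamic programming argument of Lemma~\ref{lem:optimalcontrol} only needs $Q_k(s)>0$, which holds because $R>0$. So for $P=0$ we still get $J^*(x;N) = x^\top f^N(0)\, x$, where the optimal cost over horizon $N\tau$ is
\[
J^*(x;N)=\min_{u\in\mathcal{U}_N}\bE\Big[\tau\sum_{k=0}^{N-1}x_{k\tau}^\top L x_{k\tau}+\int_0^{N\tau}u^\top R u\,dt \,\Big|\, x_0=x\Big].
\]

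Monotonicity then follows by truncation. Let $u^*_{N+1}$ be the optimal control for horizon $N+1$, and let $u = u^*_{N+1}|_{[0,N\tau)}$ be its restriction to $[0,N\tau)$. This restriction lies in $\mathcal{U}_N$, and on $[0,N\tau)$ it generates the same trajectory, because the control on each sampling interval depends only on $x_{k\tau}$. Its horizon-$N$ cost equals the horizon-$(N+1)$ cost of $u^*_{N+1}$ minus two dropped terms: $\bE[\tau x_{N\tau}^\top L x_{N\tau}]$ and $\bE\int_{N\tau}^{(N+1)\tau}u^\top R u\,dt$. Both are nonnegative. Hence
\[
x^\top f^N(0)\,x = J^*(x;N) \le J(x,u;N) \le J^*(x;N+1) = x^\top f^{N+1}(0)\,x
\]
for every $x\in\R^n$, that is, $f^N(0)\preceq f^{N+1}(0)$ in the Loewner order. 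For the base case $N=0$ we have $f^0(0)=0\preceq f(0)$ trivially, and the argument above covers it anyway.

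The only delicate step is justifying the identity $J^*(x;N)=x^\top f^N(0)x$ when $P=0$, since Theorem~\ref{thm:finitehorizon} assumes $P>0$. I would handle this by noting that Lemma~\ref{lem:optimalcontrol} uses only $W_{k+1}\succeq 0$: with $W_{k+1}\succeq 0$, $Q_k>0$ holds, the objective is strictly convex and coercive, and the formula for $f$ is well defined on $\bS_n^+$. Alternatively, one could try to show directly that $f$ is monotone on $\bS^+_n$ and iterate from $0\preceq f(0)$. That route is harder, because $M$ itself depends on $W$, so the optimal-control argument is the one I would use.
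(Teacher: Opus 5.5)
Your proposal is correct and follows essentially the same argument as the paper: restrict $u^*_{N+1}$ to $[0,N\tau)$, note that the dropped terms $\bE\bigl[\tau x_{N\tau}^\top L x_{N\tau} + \int_{N\tau}^{(N+1)\tau} u^\top R u\,dt\bigr]$ are nonnegative when $P=0$, and then invoke the optimality of $u^*_N$. You also justify applying Theorem~\ref{thm:finitehorizon} with $P=0$, since the dynamic-programming step only needs $W_{k+1}\succeq 0$ and $R>0$; this fills a point the paper leaves implicit.
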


\begin{proof}
Recall that $u^*_N$ is the optimal control law that minimizes the cost function $J(x,u;N)$ and that $J^*(x;N)= J(x,u^*_N; N)$. 
To relate $J^*(x;N)$ and $J^*(x;N + 1)$, we consider the optimal control law $u^*_{N + 1}$ for the latter, and define  
$\hat u_N := u^*_{N+1}|_{[0,N\tau)}$. 
On one hand, since $P = 0$, we have that
\begin{equation*}
 J^*(x;N+1) - J(x, \hat u_N; N) = 
\bE \left [ \tau x^\top_{N\tau}\, L \, x_{N\tau} + \int_{N\tau}^{(N+1)\tau} u^\top(t) R u(t) dt  \mid x_0 = x\right] \geq 0.
\end{equation*}
On the other hand, by optimality of $u^*_N$, we have that
$J(x, \hat u_N; N) \geq J^*(x; N)$. 
Combining the above two inequalities, we obtain that
$$
x^\top W_0(N + 1) x =  J^*(x; N + 1) \geq J^*(x; N) = x^\top W_0(N) x, 
$$
which holds for all $x\in \R^n$, so $W_0(N + 1) \geq W_0(N)$.  
\end{proof}

Now, suppose that for any given $x\in \R^n$ the infinite-horizon optimal control problem~\eqref{eq:infiniteoptimal} has a solution $u'\in \mathcal{U}$; then, by the same arguments in the proof of Lemma~\ref{lem:monotonicityofW}, we have that
$$
x^\top W_0(N) x \leq J(x,u') < \infty,   
$$
for all $N\in \N_0$. 
By Lemma~\ref{lem:monotonicityofW} and the monotone convergence theorem, $\lim_{N\to\infty} x^\top W_0(N) x$ exists for all $x\in \R^n$ and hence, the following limit exists $$W:= \lim_{N\to\infty} W_0(N) = \lim_{N\to\infty} f^N(0),$$ 
which is necessarily a fixed point of~$f$.   \hfill{\qed}
 
\subsubsection{Proof of item~2 of Theorem~\ref{thm:feedbackstabilizableandw} }
We assume that system~\eqref{eq:systemmodel} is mean-square feedback stabilizable. 
Let $\tilde K: [0, \tau)\to \R^{m\times n}$ be a stabilizing feedback gain, and $\tilde u(t):= \tilde K(t - k\tau) x_{k\tau}$, for $t\in [k\tau, (k+1)\tau)$ and $k\in \N_0$.  
We need the following lemma: 

\begin{lemma}\label{lem:finitecost}
    There exists a $\tilde W > 0$ such that for any $ x\in \R^n$, 
    \begin{equation*}
    \tilde J(x) := J(x, \tilde u) = x^\top \tilde W x.
    \end{equation*} 
\end{lemma}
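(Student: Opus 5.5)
The plan is to reduce the closed-loop system under $\tilde u$ to a linear recursion for the second-moment matrix $\Pi_k := \bE[x_{k\tau}x_{k\tau}^\top]$, show that this recursion is a stable linear map on $\bS_n$, and then express the cost as a convergent series that is a quadratic form in $x$. First, with $G$ and $H$ defined as in~\eqref{eq:defGandH} but with $K$ replaced by $\tilde K$, Lemma~\ref{lem:meanandconvariance} gives, conditional on $x_{k\tau}$,
\begin{equation*}
\bE[x_{(k+1)\tau}x_{(k+1)\tau}^\top \mid x_{k\tau}] = \tilde G x_{k\tau}x_{k\tau}^\top \tilde G^\top + \sigma^2\int_0^\tau e^{A(\tau-r)}B\tilde K(r)x_{k\tau}x_{k\tau}^\top \tilde K^\top(r)B^\top e^{A^\top(\tau-r)}dr,
\end{equation*}
where $\tilde G := e^{A\tau}+\int_0^\tau e^{A(\tau-r)}B\tilde K(r)dr$. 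Taking expectations yields $\Pi_{k+1}=\mathcal{T}(\Pi_k)$, where $\mathcal{T}:\bS_n\to\bS_n$ is linear and positive, that is, $\mathcal{T}(\bS_n^+)\subseteq \bS_n^+$. Both $\tilde G$ and the integral are finite because $\tilde K\in\mathrm{L}^2$.

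Next, I will show that $\sum_{k}\mathcal{T}^k$ converges. Stabilizability gives $\mathcal{T}^k(xx^\top)\to 0$ for every $x$. Since every $X\in\bS_n$ is a linear combination of matrices $xx^\top$ (for instance, through its eigendecomposition), this gives $\mathcal{T}^k\to 0$ on all of $\bS_n$. Hence the spectral radius of $\mathcal{T}$ is less than $1$, so $\sum_k \|\mathcal{T}^k\|<\infty$. This is the key step. The only subtlety is passing from pointwise convergence on the finite-dimensional space $\bS_n$ to $\rho(\mathcal{T})<1$, which is standard: if $\mathcal{T}^k\to0$ pointwise on a basis, then $\mathcal{T}^k\to0$ in norm, and therefore $\rho(\mathcal{T})<1$.

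Finally, I will write the cost as a quadratic form. With $\tilde R := \tau L + \int_0^\tau \tilde K^\top(s)R\tilde K(s)\,ds$, which is finite because $\tilde K\in\mathrm{L}^2$ and positive definite because $L>0$, we get
\begin{equation*}
J(x,\tilde u)=\sum_{k=0}^\infty \tr\big(\tilde R\,\Pi_k\big)=\sum_{k=0}^\infty \tr\big(\tilde R\,\mathcal{T}^k(xx^\top)\big)=x^\top \tilde W x,\qquad \tilde W:=\sum_{k=0}^\infty (\mathcal{T}^*)^k(\tilde R).
\end{equation*}
Here $\mathcal{T}^*$ is the adjoint of $\mathcal{T}$ with respect to the trace inner product, and its powers are summable for the same reason. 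Exchanging the expectation with the sum is justified by monotone convergence, since every term is nonnegative. The matrix $\tilde W$ is symmetric, and it satisfies $\tilde W\geq\tilde R>0$ because each $(\mathcal{T}^*)^k(\tilde R)\geq0$, $\mathcal{T}^*$ being a positive map. This shows that $\tilde W>0$ and that $\tilde J(x)=x^\top\tilde W x$ is finite for all $x$, as claimed.
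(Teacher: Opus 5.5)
Your proposal is correct and follows essentially the same route as the paper: the second-moment recursion $\Pi_{k+1}=\mathcal{T}(\Pi_k)$, stability of $\mathcal{T}$ because the matrices $xx^\top$ span $\bS_n$, and the closed form $\tilde W=\sum_k(\mathcal{T}^*)^k(\tilde R)$. You also make explicit two points the paper leaves implicit, namely the sum–expectation interchange and $\tilde W\geq\tilde R>0$ via positivity of $\mathcal{T}^*$; in addition, your first term $\tilde G\,S\,\tilde G^\top$ has the correct orientation, whereas the paper's $\mathcal{L}$ has the transposes swapped.
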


\begin{proof} 
Let $\tilde G(s)$ be defined in the same way as $G(s)$ in~\eqref{eq:defGandH}, but with $K$ replaced by $\tilde K$. 
Recall that $\bS^n$ is the space of $n$-by-$n$ symmetric matrices. We equip $\bS^n$ with the inner-product $\langle X, Y \rangle := \tr(XY)$. Let $\mathcal{L}:\bS^n\to \bS^n$ be the linear map defined as follows:
\begin{equation*}
\mathcal{L}(S) = \tilde G^\top (\tau) S \tilde G(\tau) +  \sigma^2 \int_0^\tau e^{A(\tau - s)} B \tilde K(s) S \tilde K^\top (s) B^\top e^{A^\top (\tau - s)} ds.
\end{equation*}
Let $\mathcal{L}^*$ be the dual of $\mathcal{L}$, which can be expressed explicitly as 
\begin{equation*}
\mathcal{L}^*(S) = \tilde G (\tau) S \tilde G^\top(\tau) +  \sigma^2 \int_0^\tau \tilde K^\top (s) B^\top e^{A^\top (\tau - s)} S e^{A(\tau - s)} B \tilde K(s)  ds.
\end{equation*}

Now, consider the stochastic system~\eqref{eq:systemmodel} driven by $\tilde u(t)$. 
Let $\bar x((k+1)\tau)$ and $\Sigma((k+1)\tau)$ be given as in~\eqref{eq:defbarxandcovariance}. 
Using Lemma~\ref{lem:meanandconvariance}, we obtain by computation that 
\begin{equation*}\label{eq:updateruleforH}
    \bE[x_{(k+1)\tau} x_{(k + 1)\tau}^\top]  = 
    \mathcal{L}(\bE[x_{k\tau}x_{k\tau}^\top]), 
\end{equation*}
so $\bE[x_{k\tau}x_{k\tau}^\top] = \mathcal{L}^k(x_0x_0^\top)$ for all $k\in \N_0$.

Since $\tilde K$ is a stabilizing feedback gain,  $\bE[x_{k\tau}x_{k\tau}^\top]$ converges to $0$ as $k\to\infty$ for any initial condition $x_0\in \R^n$. 
Note that the space $\bS^n$ is spanned by $xx^\top$ for all $x\in \R^n$. 
Thus,  $\mathcal{L}$ is a stable linear operator, i.e., all of its eigenvalues belong to the interior of the unit disc of the complex plane. 

Finally, we compute $\tilde J(x)$ and show that it can be expressed as a quadratic form. For convenience, let 
$$
Y:= \tau L  + \int_0^\tau \tilde K^\top(s) R \tilde K(s) ds.
$$
Then, 
\begin{equation*}
\tilde J(x)  
= \sum_{k = 0}^\infty\tr\left ( Y\bE[x_{k\tau} x_{k\tau}^\top] \right ) 
= \sum_{k = 0}^\infty \tr \left ( Y \mathcal{L}^k(xx^\top) \right ) 
= x^\top \sum_{k = 0}^\infty \mathcal{L}^{*k}(Y)  x.
\end{equation*}
Since $\mathcal{L}^*$ and $\mathcal{L}$ share the same eigenvalues, all eigenvalues of $\mathcal{L}^*$ belong to the interior of the unit disk of the complex plane. We thus conclude that $\tilde W:= \sum_{k = 0}^\infty \mathcal{L}^{*k}(Y)$ exists. 
\end{proof}

Item~2 of Theorem~\ref{thm:feedbackstabilizableandw} follows directly from Lemmas~\ref{lem:monotonicityofW} and~\ref{lem:finitecost}; indeed, for any $x\in \R^n$, 
the sequence $\{x^\top f^N(0) x\}_{N\in \N_0}$ is monotonically increasing in $N$ and is bounded above by $x^\top \tilde W x$. Thus, the limit $W:=\lim_{N\to\infty}f^N(0)$ exists and satisfies $W = f(W)$.  \hfill{\qed}

\subsection{Proof of Theorem~\ref{thm:globalconvergence}}\label{ssec:convergence}
In this subsection, we show that if $f$ has a fixed point $W$, then for any $P\geq 0$, $\lim_{N\to\infty}f^N(P) = W$. This, in particular, implies that $W$ is the unique fixed point of $f$; indeed, if $W'$ is another fixed point of $f$, then $W' = \lim_{N\to\infty}f^N(W') = W$.  

Since $f$ has a fixed point, system~\eqref{eq:systemmodel} is mean-square feedback stabilizable as shown in Subsection~\ref{ssec:sufficiency}. Then, by the arguments in Subsection~\ref{ssec:necessity}, we have that 
$f^N(0)$ converges to a fixed point of $f$. Let the fixed point $W$ be chosen such that $W = \lim_{N\to\infty} f^N(0)$.   

Let $J(x, u;N)$ be the cost function given in~\eqref{eq:finitehorizon} corresponding to $P = 0$, and $J'(x, u;N)$ correspond to an arbitrary $P \geq 0$, which will be fixed in the sequel.  
Let $J^*(x; N)$ and $J'^*(x; N)$ be defined in the same way, and let  $u^*_N(x)$ and $u'^*_N(x)$ be the associated optimal control laws.

Let $u^*\in \mathcal{U}$ be the optimal control law given in~\eqref{eq:optimalcontrollawinfinitehorizon}, which solves the infinite-horizon optimal control problem~\eqref{eq:infiniteoptimal}. 
We consider the stochastic system~\eqref{eq:systemmodel} driven by $u^*(t)$. All the expectations below are with respect to solutions of this system.

Given any $\epsilon > 0$, we show below that for sufficiently large $N$, $|x^\top (f^N(P) - W) x| \leq \epsilon \|x\|^2$ for any $x\in \R^n$. 

First, by item~1 of Theorem~\ref{thm:infinitehorizon}, we have that $J(x, u^*) = x^\top W x$. Since the feedback gains $K_k$ associated with $u^*$ are time invariant, i.e., $K_k = K$ for all $k\in \N_0$, it follows that
\begin{equation}\label{eq:diff1}
\Delta_1(N) := J(x, u^*) - J(x, u^*|_{[0,N\tau)}; N)  = \bE[x_{N\tau}^\top W x_{N\tau}],
\end{equation}

Next, note that  $J'(x, u;N) \geq J(x,u; N)$ for all $x\in \R^n$, for all $u \in \mathcal{U}_N$, and for all $N\in \N_0$. This, in particular, implies that $J'^*(x; N) \geq J^*(x; N)$. Combining this with the fact that  $J(x, u^*|_{[0,N\tau)}; N) \leq J(x, u^*)$, we obtain that 
\begin{multline}\label{eq:diff2}
\Delta_2(N):= J'(x, u'^*_N; N) - J(x, u^*|_{[0,N\tau)}; N) \geq \\ J(x, u^*_N; N) - J(x, u^*) = x^\top (f^N(0) - W) x. 
\end{multline}
We can also bound $\Delta_2(N)$ from above and have that 

\begin{equation}\label{eq:diff3}
\Delta_2(N) \leq   J'(x, u^*|_{[0,N\tau)}; N) - J(x, u^*|_{[0,N\tau)}; N)  = \bE[x_{N\tau}^\top P x_{N\tau}],
\end{equation}
where the inequality follows from the optimality of $u'^*_N$. 

Now, using the same arguments in the proof of Lemma~\ref{lem:finitecost}, we have that $\bE[\|x_{N\tau}\|^2]$ decays exponentially fast in~$N$. Thus, given any $\epsilon > 0$, there exists an $N(\epsilon)\in \N_0$ such that 
\begin{equation}\label{eq:diff4}
\bE[x_{N\tau}^\top (W + P) x_{N\tau}] \leq \frac{\epsilon}{2} \|x\|^2,  \mbox{for all } N \geq N(\epsilon) \mbox{ for all } x\in \R^n.  
\end{equation}
Also, since $f^N(0)$ is monotonically increasing in $N$ and converges to $W$, we can increase $N(\epsilon)$, if necessary, so that 
\begin{equation}\label{eq:diff5}
0 \leq W - f^N(0) \leq \frac{\epsilon}{2} I \quad \mbox{for all } N \geq N(\epsilon).
\end{equation}

Combining the above arguments, we obtain that for any $x\in \R^n$ and for any $N\geq N(\epsilon)$, 
\begin{multline*}
|x^\top (f^N(P) - W) x |  
=   |J'^*(x; N) - J^*(x)| 
\leq  |\Delta_1(N)| + |\Delta_2(N)| \\
\leq  \bE[x_{N\tau}^\top (W +P) x_{N\tau}] + x^\top (W - f^N(0)) x \leq  \epsilon \|x\|^2,
\end{multline*}
where the second inequality follows from~\eqref{eq:diff1},~\eqref{eq:diff2}, and~\eqref{eq:diff3} and the last inequality follows from~\eqref{eq:diff4} and~\eqref{eq:diff5}. We thus conclude that $\lim_{N\to\infty}f^N(P) = W$. \hfill{\qed}
 
\printbibliography

\end{document}